\newtheorem{theorem}{Theorem}[section]
\newtheorem{corollary}{Corollary}[section]
\newtheorem{lemma}{Lemma}[section]
\newtheorem{remark}{Remark}[section]
\title{A Formula for the Number of $k$-almost Primes}
\author{Steven Creech}
\date{October 2023}
\begin{document}

\begin{abstract}
    In 2005, E. Noel and G. Panos constructed a formula to count the number of semiprimes less than a given value $x$. In 2006, this formula was rediscovered independently by R.G. Wilson V. However, each citation of this result was simply through personal communication, and a formal proof has not been written down anywhere in the literature. In this article, we generalize their formula to the case of $k$-almost primes and square-free $k$-almost primes.
\end{abstract}

\maketitle

\section{Introduction}

Given a positive integer $n>1$, we have that $n$ has a prime factorization given by 
\[
n=\prod_{i=1}^r p_i^{a_i}.
\]
We recall that the function $\Omega(n):=\sum_{i=1}^r a_i$ counts the number of prime factors of $n$ (with multiplicity). We have that a $k$-almost prime is a number $n$ such that $\Omega(n)=k$. We note that a $1$-almost prime is just a prime number while a $2$-almost prime is known as a semiprime. Now we wish to have a generalization of the prime counting function, $\pi(x)=\#\{n\leq x: n\text{ is prime}\}$ to the $k$-almost prime counting function 
\[
\pi_k(x)=\#\{n\leq x: \Omega(n)=k\}.
\]
We will also define the square-free $k$-almost prime counting function as 
\[
\pi_k^*(x)=\#\{n\leq x: \Omega(n)=k, n\text{ is square-free}\}.
\]

Now it was remarked in \cite{weissteinSemiprime} that the following formula for $\pi_2(x)$ was discovered in 2005 by E. Noel and G. Panos, and then independently rediscovered by R.G. Wilson V. in 2006: 
\begin{equation}\label{eq:semiprime}
\pi_2(x)=\sum_{i=1}^{\pi(\sqrt{x})}\left(\pi\left(\frac{x}{p_i}\right)-i+1\right).
\end{equation}
Where in the above formula we have that $p_i$ denotes the $i$th prime number. 
However, the citation for both discoveries was given as personal communications and a formal proof of this formula does not exist anywhere in the literature. Thus, we will finally give a proof of this formula. In addition, we will show that with a minor modification we get the following formula for the number of square-free semiprimes:
\begin{equation}\label{eq:sqfr_semiprime}
\pi_2^*(x)=\sum_{i=1}^{\pi(\sqrt{x})}\left(\pi\left(\frac{x}{p_i}\right)-i\right).
\end{equation}

Furthermore, we will generalize the above formulas to the case of $k$-almost primes. We summarize this into the following theorem.

\begin{theorem}\label{thm:MainThm}
    For $k\geq 2$, we have the following formula for the number of $k$-almost primes less than $x$ 
    \begin{equation}\label{eq:kalmostprime}
        \pi_k(x)=\sum_{i_{k-1}=1}^{\pi(\sqrt[k]{x})}\sum_{i_{k-2}=i_{k-1}}^{\pi\left(\sqrt[k-1]{\frac{x}{p_{i_{k-1}}}}\right)}\sum_{i_{k-3}=i_{k-2}}^{\pi\left(\sqrt[k-2]{\frac{x}{p_{i_{k-2}}p_{i_{k-1}}}}\right)}...\sum_{i_1=i_2}^{\pi\left(\sqrt{\frac{x}{p_{i_2}p_{i_3}...p_{i_{k-1}}}}\right)}\left(\pi\left(\frac{x}{p_1p_2...p_{k-1}}\right)-i_1+1\right)
    \end{equation}
    Similarly, we have the following formula for the number of square-free $k$-almost primes less than $x$
    \begin{equation}\label{eq:sqfrkalmostprime}
        \pi_k^*(x)=\sum_{i_{k-1}=1}^{\pi(\sqrt[k]{x})}\sum_{i_{k-2}=i_{k-1}+1}^{\pi\left(\sqrt[k-1]{\frac{x}{p_{i_{k-1}}}}\right)}\sum_{i_{k-3}=i_{k-2}+1}^{\pi\left(\sqrt[k-2]{\frac{x}{p_{i_{k-2}}p_{i_{k-1}}}}\right)}...\sum_{i_1=i_2+1}^{\pi\left(\sqrt{\frac{x}{p_{i_2}p_{i_3}...p_{i_{k-1}}}}\right)}\left(\pi\left(\frac{x}{p_1p_2...p_{k-1}}\right)-i_1\right)
    \end{equation}
\end{theorem}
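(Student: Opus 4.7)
The plan is to prove both formulas by a direct combinatorial argument using unique factorization. Every $k$-almost prime $n\leq x$ can be written in exactly one way as $n = p_{a_1}p_{a_2}\cdots p_{a_k}$ with $a_1\leq a_2\leq \cdots\leq a_k$ (with strict inequalities in the square-free case), so $\pi_k(x)$ equals the number of such tuples satisfying $p_{a_1}\cdots p_{a_k}\leq x$. My formula simply enumerates these tuples by nesting: fix $a_1$, then $a_2$, and so on. Matching notation with the theorem, I would identify the outer summation variable $i_{k-1}$ with $a_1$ (the index of the smallest prime factor), $i_{k-2}$ with $a_2$, and so on down to $i_1 = a_{k-1}$, leaving $a_k$ as the index counted by the innermost expression in parentheses.

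First I would justify the upper limits on each nested sum. Since $a_j$ is the smallest index among $a_j,a_{j+1},\ldots,a_k$, one has $p_{a_j}^{k-j+1}\leq p_{a_j}p_{a_{j+1}}\cdots p_{a_k}\leq x/(p_{a_1}\cdots p_{a_{j-1}})$, hence $a_j\leq \pi\!\left((x/(p_{a_1}\cdots p_{a_{j-1}}))^{1/(k-j+1)}\right)$, which are precisely the bounds appearing in the theorem. The lower limits come straight from the ordering: $a_{j-1}\leq a_j$ in the general case (giving $i_{k-j}=i_{k-j+1}$) and $a_{j-1}<a_j$ in the square-free case (giving $i_{k-j}=i_{k-j+1}+1$).

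Next I would count the innermost level. Having fixed $a_1,\ldots,a_{k-1}$, the admissible values of $a_k$ are the integers in $\{a_{k-1},a_{k-1}+1,\ldots,\pi(x/(p_{a_1}\cdots p_{a_{k-1}}))\}$, of which there are $\pi(x/(p_{a_1}\cdots p_{a_{k-1}}))-a_{k-1}+1$; in the square-free case one starts at $a_{k-1}+1$ and obtains $\pi(x/(p_{a_1}\cdots p_{a_{k-1}}))-a_{k-1}$. These match the innermost terms of the formulas (modulo what I believe is a typographical substitution of $p_1\cdots p_{k-1}$ for $p_{i_1}\cdots p_{i_{k-1}}$ in the paper's display). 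Summing over the nested ranges then gives the claimed identities.

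The one step that genuinely requires care, and which I expect to be the main obstacle, is verifying that the innermost count is always nonnegative under the stated outer bounds, so the formula needs no $\max(\cdot,0)$ truncation. This follows by induction along the nesting: from the bound $a_{k-1}\leq \pi\!\left(\sqrt{x/(p_{a_1}\cdots p_{a_{k-2}})}\right)$ one deduces $p_{a_{k-1}}^2\leq x/(p_{a_1}\cdots p_{a_{k-2}})$, hence $p_{a_{k-1}}\leq x/(p_{a_1}\cdots p_{a_{k-1}})$, so $\pi(x/(p_{a_1}\cdots p_{a_{k-1}}))\geq a_{k-1}$, and the innermost summand is nonnegative; the square-free version is identical with strict inequalities. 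Once this consistency is checked, the two formulas follow immediately from the bijection between $k$-almost primes $n\leq x$ and the enumerated tuples.
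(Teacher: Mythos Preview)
Your argument is correct and rests on the same underlying bijection as the paper: a $k$-almost prime $n\le x$ is uniquely a nondecreasing (resp.\ strictly increasing) $k$-tuple of prime indices, and the nested sums enumerate these tuples level by level. The paper, however, organises this differently. It first isolates the $k=2$ case as a separate lemma, then formulates and proves a strengthened statement (the number of $k$-almost primes $\le x$ whose smallest prime factor is at least $p_n$, i.e.\ the same iterated sum with the outermost index starting at $n$) by induction on $k$: the inductive step peels off the smallest prime factor $p_{i_{k-1}}$ and invokes the $(k-1)$-case on $x/p_{i_{k-1}}$ with lower bound $n=i_{k-1}$. Your direct, non-inductive treatment is a bit cleaner and avoids the need to carry the auxiliary parameter $n$; on the other hand, the paper's inductive lemma gives a slightly more general formula as a by-product. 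Your explicit verification that the innermost summand is nonnegative (so no $\max(\cdot,0)$ truncation is needed) is a point the paper leaves implicit, and your observation that the displayed $p_1\cdots p_{k-1}$ should read $p_{i_1}\cdots p_{i_{k-1}}$ is also correct.
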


\begin{remark}
    A sphenic number is a squre-free $3$-almost prime. I had found the StackExchange post \cite{1140945} which gave the formula for $\pi_3^*(x)$ in the above; however, it did not provide a proof of the formula. However, it should be noted that this post served as an inspiration to generalize to the $k$-almost prime case.
\end{remark}

Now this paper will be organized into two parts. In the first part, we shall just go through the semiprime case, and then we shall move to the $k$-almost prime case. We separate out the semiprime case for two reasons. Namely, for the $k$-almost prime case, the proof is done by induction on $k$, and the semiprime case will be the base case. Secondly, since this formula for $\pi_2(x)$ has been written down, it is nice to have an explicit proof for this specific formula. 

\section{Proof of the Semiprime Case}
In this section, we shall give a proof of \cref{eq:semiprime} and \cref{eq:sqfr_semiprime}. 

\begin{lemma}\label{lem:semiprimelemma}
    The number of semiprimes less than or equal to a value $x$ whose smallest prime factor is $p_i$ is 
    \[
    \pi\left(\frac{x}{p_i}\right)-i+1.
    \]
    Furthermore, the number of square-free semiprimes less than or equal to $x$ whose smallest prime factor is $p_i$ is
    \[
    \pi\left(\frac{x}{p_i}\right)-i.
    \]
\end{lemma}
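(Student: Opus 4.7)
The plan is to parametrize each relevant semiprime by its two prime factors and then translate the counting problem into a count of primes in an interval, which $\pi$ computes directly.

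First I would observe that any semiprime $n$ with smallest prime factor $p_i$ can be written uniquely as $n = p_i q$ where $q$ is a prime satisfying $q \geq p_i$ (the inequality being strict precisely when $n$ is square-free). The condition $n \leq x$ is then equivalent to $q \leq x/p_i$. So counting semiprimes $n \leq x$ with smallest prime factor $p_i$ is the same as counting primes $q$ in the interval $[p_i, x/p_i]$, and counting square-free ones amounts to counting primes $q$ in $(p_i, x/p_i]$.

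Next I would convert these interval counts into expressions involving $\pi$. The number of primes $q$ with $p_i \leq q \leq x/p_i$ is
\[
\pi\!\left(\tfrac{x}{p_i}\right) - \pi(p_i - 1) = \pi\!\left(\tfrac{x}{p_i}\right) - (i-1) = \pi\!\left(\tfrac{x}{p_i}\right) - i + 1,
\]
using $\pi(p_i - 1) = i - 1$ since the primes less than $p_i$ are exactly $p_1, \dots, p_{i-1}$. Similarly, the number of primes $q$ with $p_i < q \leq x/p_i$ is
\[
\pi\!\left(\tfrac{x}{p_i}\right) - \pi(p_i) = \pi\!\left(\tfrac{x}{p_i}\right) - i,
\]
which gives the square-free count.

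The only thing requiring a brief comment is consistency of the range: for a semiprime $n \leq x$ with smallest prime factor $p_i$, one has $p_i^2 \leq p_i q = n \leq x$, hence $p_i \leq \sqrt{x}$, so $i \leq \pi(\sqrt{x})$ and the quantities $\pi(x/p_i) - i + 1$ and $\pi(x/p_i) - i$ are nonnegative in the regime of interest. There is no real obstacle here; the argument is essentially bookkeeping on the interval endpoints, and the only subtlety is being careful about whether the lower endpoint $p_i$ is included, which is precisely the difference between the two formulas.
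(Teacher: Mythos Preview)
Your proof is correct and follows essentially the same approach as the paper: write $n=p_i q$ with $q$ a prime satisfying $q\ge p_i$ (strict in the square-free case), translate $n\le x$ into $q\le x/p_i$, and count primes in the resulting interval via $\pi(x/p_i)-\pi(p_i-1)$ or $\pi(x/p_i)-\pi(p_i)$. Your added remark about the nonnegativity of the counts when $p_i\le\sqrt{x}$ is a nice touch but not something the paper includes.
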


\begin{proof}
    Given a semiprime $n\leq x$ whose smallest prime factor is $p_i$, we can then write $n=p_iq$ where $p_i\leq q$, and $q$ is a prime number. Thus, we have that $p_i\leq q\leq \frac{x}{p_i}$. Thus, there are $\pi\left(\frac{x}{p_i}\right)-\pi(p_i-1)=\pi\left(\frac{x}{p_i}\right)-i+1$ choices for the prime $q$, and we conclude the formula. 

    We remark that the same proof works for the semiprime case, but we must have that $p_i<q\leq \frac{x}{p_i}$, so we will have $\pi\left(\frac{x}{p_i}\right)-\pi(p_i)=\pi\left(\frac{x}{p_i}\right)-i$ choices for $q$.
\end{proof}

Now to derive \cref{eq:semiprime} and \cref{eq:sqfr_semiprime} we begin by observing that the largest possible prime factor for a semiprime less than or equal to $x$ is $\sqrt{x}$. We note that this will actually be achieved if and only if $x=p^2$ for some prime. Thus, the first $\pi(\sqrt{x})$ prime numbers will be the only possible prime factors for semiprimes less than or equal to $x$. Now to count all the (square-free) semiprimes less than or equal to $x$, we count the number of (square-free) semiprimes less than or equal to $x$ whose smallest prime factor is $p_i$, and then add up all of these semiprimes for each of the $\pi(\sqrt{x})$ possible primes. These sums are precisely \cref{eq:semiprime} and \cref{eq:sqfr_semiprime}.

We remark that if we start the sums at some value $n$ instead of $1$, we that will count the number of (square-free) semiprimes less than $x$ whose smallest prime factor is at least $p_n$ (although it may be greater). Furthermore, we see that setting $n=1$ in the below formula will have us recover \cref{eq:semiprime} and \cref{eq:sqfr_semiprime}.

\begin{corollary}\label{cor:semiprimecor}
    The number of semiprimes less than $x$ whose smallest prime factor is at least $p_n$ is
    \[
    \sum_{i=n}^{\pi(\sqrt{x})}\pi\left(\frac{x}{p_i}\right)-i+1.
    \]
    Similarly, for the number of square-free semiprimes less than $x$ whose smallest prime factor is at least $p_n$ is 
    \[
    \sum_{i=n}^{\pi(\sqrt{x})}\pi\left(\frac{x}{p_i}\right)-i.
    \]
    
\end{corollary}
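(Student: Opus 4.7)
The plan is to deduce the corollary directly from \cref{lem:semiprimelemma} by partitioning the relevant set of (square-free) semiprimes according to their smallest prime factor. First I would observe that any semiprime $n \leq x$ can be written $n = pq$ with $p \leq q$ both prime, which forces the smallest prime factor $p$ to satisfy $p \leq \sqrt{x}$. Equivalently, if the smallest prime factor is $p_i$, then $i \leq \pi(\sqrt{x})$. So the constraint "smallest prime factor at least $p_n$" translates to $n \leq i \leq \pi(\sqrt{x})$.

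Next I would note that the set of semiprimes $\leq x$ whose smallest prime factor is at least $p_n$ decomposes as a disjoint union, over $i \in \{n, n+1, \dots, \pi(\sqrt{x})\}$, of the sets of semiprimes whose smallest prime factor is \emph{exactly} $p_i$. This is clearly a partition: every qualifying semiprime has a unique smallest prime factor, and by the previous paragraph its index lies in the stated range.

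Finally I would apply \cref{lem:semiprimelemma}: the count of semiprimes $\leq x$ with smallest prime factor exactly $p_i$ is $\pi(x/p_i) - i + 1$, and the square-free count is $\pi(x/p_i) - i$. Summing over $i$ from $n$ to $\pi(\sqrt{x})$ yields the two displayed formulas. There is no real obstacle here; the lemma does all the combinatorial work, and the only thing to check is the disjoint partition by smallest prime factor together with the range of $i$. The corollary is essentially bookkeeping, and taking $n = 1$ recovers \cref{eq:semiprime} and \cref{eq:sqfr_semiprime} as advertised in the preceding remark.
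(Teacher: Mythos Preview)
Your argument is correct and mirrors the paper's approach exactly: partition the (square-free) semiprimes $\leq x$ with smallest prime factor at least $p_n$ according to their smallest prime factor $p_i$, observe that $n \leq i \leq \pi(\sqrt{x})$, and sum the counts from \cref{lem:semiprimelemma}. The paper states the corollary as an immediate consequence of the same reasoning used to derive \cref{eq:semiprime} and \cref{eq:sqfr_semiprime}, simply starting the sum at $n$ instead of $1$.
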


\section{Proof for the General $k$-almost Prime Case}

We shall now give a proof of \cref{thm:MainThm}. We shall do this by a similar counting argument. The main observation is that a $k$-almost prime is the product of a prime and a $k-1$-almost prime which will allow us to induct. Thus, we will have the following lemma which is a generalization of \cref{cor:semiprimecor} for the $k$-almost prime case. We will also remark that the proof of \cref{thm:MainThm} follows directly from \cref{Lem:kalmostlemma} by setting $n=1$.

\begin{lemma}\label{Lem:kalmostlemma}
    The number of $k$-almost primes less than a given value $x$ whose smallest possible prime factor is $p_n$ is 
    \begin{equation}\label{eq: kalmostprimes nversion}
    \sum_{i_{k-1}=n}^{\pi(\sqrt[k]{x})}\sum_{i_{k-2}=i_{k-1}}^{\pi\left(\sqrt[k-1]{\frac{x}{p_{i_{k-1}}}}\right)}\sum_{i_{k-3}=i_{k-2}}^{\pi\left(\sqrt[k-2]{\frac{x}{p_{i_{k-2}}p_{i_{k-1}}}}\right)}...\sum_{i_1=i_2}^{\pi\left(\sqrt{\frac{x}{p_{i_2}p_{i_3}...p_{i_{k-1}}}}\right)}\left(\pi\left(\frac{x}{p_1p_2...p_{k-1}}\right)-i_1+1\right)
    \end{equation}

    Similarly, the number of (square-free) $k$-almost primes less than a given value $x$ whose smallest possible prime factor is $p_n$ is 

    \begin{equation}\label{eq: sqfrkalmostprime nversion}
    \sum_{i_{k-1}=n}^{\pi(\sqrt[k]{x})}\sum_{i_{k-2}=i_{k-1}+1}^{\pi\left(\sqrt[k-1]{\frac{x}{p_{i_{k-1}}}}\right)}\sum_{i_{k-3}=i_{k-2}+1}^{\pi\left(\sqrt[k-2]{\frac{x}{p_{i_{k-2}}p_{i_{k-1}}}}\right)}...\sum_{i_1=i_2+1}^{\pi\left(\sqrt{\frac{x}{p_{i_2}p_{i_3}...p_{i_{k-1}}}}\right)}\left(\pi\left(\frac{x}{p_1p_2...p_{k-1}}\right)-i_1\right)
    \end{equation}
\end{lemma}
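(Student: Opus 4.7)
The plan is to prove \cref{Lem:kalmostlemma} by induction on $k$, with the base case $k=2$ being exactly \cref{cor:semiprimecor}. For the inductive step, the key structural observation is that any $k$-almost prime $m\leq x$ can be written uniquely as $m = p_j \cdot m'$ where $p_j$ is its smallest prime factor and $m'$ is a $(k-1)$-almost prime with $m'\leq x/p_j$; moreover every prime factor of $m'$ must be $\geq p_j$ in general, and $>p_j$ in the square-free case. Since $p_j^k \leq m \leq x$, the smallest prime factor must satisfy $p_j \leq \sqrt[k]{x}$, i.e.\ $j \leq \pi(\sqrt[k]{x})$.

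First I would stratify by the smallest prime factor $p_j$, writing the count of $k$-almost primes $\leq x$ with smallest prime factor $\geq p_n$ as
\[
\sum_{j=n}^{\pi(\sqrt[k]{x})} N_j, \qquad N_j := \#\{(k-1)\text{-almost primes } m' \leq x/p_j \text{ with smallest prime factor} \geq p_j\}.
\]
Then I would apply the inductive hypothesis to $N_j$ with $k$ replaced by $k-1$, $x$ replaced by $x/p_j$, and the lower-bound parameter $n$ replaced by $j$ (respectively $j+1$ in the square-free case). The inductive hypothesis produces a nested sum of $k-2$ summations with upper limits $\pi(\sqrt[k-1]{x/p_j})$, $\pi(\sqrt[k-2]{x/(p_j p_{i_{k-2}})})$, and so on, and innermost summand $\pi(x/(p_1\cdots p_{k-2})) - i_1 + 1$. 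Relabeling $j = i_{k-1}$ and prepending the outer sum $\sum_{i_{k-1}=n}^{\pi(\sqrt[k]{x})}$, each $\sqrt[k-\ell]{\,\cdot\,}$ expression acquires the extra factor $p_{i_{k-1}}$ in the denominator and thus matches the corresponding upper limit in \cref{eq: kalmostprimes nversion} exactly; similarly the innermost term is unchanged because $p_1\cdots p_{k-2}$ at level $k-1$ becomes $p_1\cdots p_{k-1}$ once the outer prime is absorbed into the innermost denominator. The square-free argument is identical, with the shift $i_\ell = i_{\ell+1}+1$ propagating from the ``$>$'' requirement on consecutive prime factors.

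The main obstacle is bookkeeping rather than substance: I need to verify that the nested upper limits produced by the inductive hypothesis line up cleanly with the displayed formula, in particular that the product of primes appearing under each root at level $k-\ell$ is exactly $p_{i_{k-\ell+1}} p_{i_{k-\ell+2}} \cdots p_{i_{k-1}}$, and that the innermost summand is $\pi(x/(p_1\cdots p_{k-1})) - i_1 + 1$ (respectively $-i_1$). I would verify this by carefully naming the dummy variables from the inductive hypothesis — calling them $i_1, \ldots, i_{k-2}$ shifted so as to match the outer formula — and checking that substituting $x \mapsto x/p_{i_{k-1}}$ into each iterated-root upper limit produces the claimed expression. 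Once this indexing check is done, \cref{thm:MainThm} follows by setting $n=1$.
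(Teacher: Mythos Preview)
Your proposal is correct and follows essentially the same approach as the paper: induction on $k$ with \cref{cor:semiprimecor} as the base case, stratification by the smallest prime factor $p_{i_{k-1}}$, and application of the inductive hypothesis with $x$ replaced by $x/p_{i_{k-1}}$ and the lower-bound parameter shifted to $i_{k-1}$ (respectively $i_{k-1}+1$ in the square-free case). The paper's write-up is terser about the bookkeeping you highlight, but the argument is the same.
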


\begin{proof}
    We shall prove this by inducting on $k$, we note that \cref{cor:semiprimecor} will serve as the basecase for $k=2$. Now we shall assume that the lemma is true for $k-1$, and then prove it for $k$. 

    We begin by deriving \cref{eq: kalmostprimes nversion}. To see this we shall count the number of $k$-almost primes less than or equal to $x$ whose smallest prime factor is $p_{i_{k-1}}$. Given such a $k$-almost prime $m\leq x$, we can write $m=p_{i_{k-1}}q$ where $q$ is a $k-1$-almost prime. Thus, we have that $p_{i_{k-1}}\leq q\leq \frac{x}{p_{i_{k-1}}}$. Then by the inductive hypothesis the number of such $k-1$ almost primes is given by 
    \[
    \sum_{i_{k-2}=i_{k-1}}^{\pi\left(\sqrt[k-1]{x/p_{i_{k-1}}}\right)}\sum_{i_{k-3}=i_{k-2}}^{\pi\left(\sqrt[k-2]{\frac{x/p_{i_{k-1}}}{p_{i_{k-2}}}}\right)}...\sum_{i_1=i_2}^{\pi\left(\sqrt{\frac{x/p_{i_{k-1}}}{p_{i_2}p_{i_3}...p_{i_{k-2}}}}\right)}\left(\pi\left(\frac{x/p_{i_{k-1}}}{p_1p_2...p_{k-2}}\right)-i_1+1\right).
    \]
    Thus, the number of $k$-almost primes whose smallest prime factor is $p_{i_{k-1}}$ is 
    \[
    \sum_{i_{k-2}=i_{k-1}}^{\pi\left(\sqrt[k-1]{\frac{x}{p_{i_{k-1}}}}\right)}\sum_{i_{k-3}=i_{k-2}}^{\pi\left(\sqrt[k-2]{\frac{x}{p_{i_{k-2}}p_{i_{k-1}}}}\right)}...\sum_{i_1=i_2}^{\pi\left(\sqrt{\frac{x}{p_{i_2}p_{i_3}...p_{i_{k-1}}}}\right)}\left(\pi\left(\frac{x}{p_1p_2...p_{k-1}}\right)-i_1+1\right).
    \]
    Now by doing the same counting argument by observing that the largest prime factor of a $k$-almost prime less than or equal to $x$ will be $\pi(\sqrt[k]{x})$. Thus, to count the number of $k$-almost primes less than $x$ whose smallest possible prime factor is $p_n$, we just add up the add the number of $k$-almost primes whose smallest prime factor is $p_{i_{k-1}}$ for each $i_{k-1}$ from $n$ to $\pi(\sqrt[k]{x})$ which is exactly \cref{eq: kalmostprimes nversion}.

    Now for the square-free case, we remark that the same proof works nearly verbatim. The main difference is that when we write our square-free $k$-almost prime whose smallest prime factor is $p_{i_{k-1}}$ as $n=p_{i_{k-1}}q$, we will have that $q$ must be a square-free $k-1$-almost prime. This will give us that $p_{i_{k-1}}<q\leq \frac{x}{p_{i_{k-1}}}$. Then applying the induction hypothesis and counting up in the same way gives us \cref{eq: sqfrkalmostprime nversion}. We remark that since we have the strict inequality $p_{i_{k-1}}<q$ when applying the inductive hypothesis we must start our sum at $i_{k-2}=i_{k-1}+1$ which is the key difference of the two formulas.
\end{proof}

\raggedright
\bibliographystyle{alpha}
\bibliography{main.bib}
\end{document}